\def\F{\mathbb{F}}
\def\Z{\mathbb{Z}}
\def\R{\mathbb{R}}
\def\C{\mathbb{C}}
\def\Q{\mathbb{Q}}
\def\O{\mathcal{O}}
\def\eps{\varepsilon}
\DeclareMathOperator{\Ht}{Ht}
\newtheorem{theorem}{Theorem}[section]
\newtheorem{lemma}[theorem]{Lemma}
\newtheorem{corollary}[theorem]{Corollary}
\newtheorem{remark}[theorem]{Remark}
\numberwithin{equation}{section}
\newtheorem{prop}[theorem]{Proposition}
\title{Polynomial densities and Heilbronn's criterion}
\date{September 2025}
\author{Alexis Hibbler}
\address{Acera School\\Winchester, MA 01890, USA}
\email{alexis@aceraschool.org}
\author{Kevin J. McGown}
\address{Department of Mathematics and Statistics\\California State University, Chico\\Chico, CA 95929, USA}
\email{kmcgown@csuchico.edu}
\author{Enrique Trevi\~no}
\address{Department of Mathematics and Computer Science\\Lake Forest College\\Lake Forest, IL 60045, USA}
\email{trevino@lakeforest.edu}
\keywords{Eisenstein polynomials, Heilbronn's criterion, norm-Euclidean fields, polynomial statistics}
\begin{document}

\begin{abstract}
Heilbronn gave a sufficient condition for a number
field with a totally ramified prime to fail to be norm-Euclidean.
We say that Heilbronn's criterion applies to a polynomial $f$
if it applies to the number field $K=\Q[x]/(f)$ generated by $f$.

Suppose $n\geq 3$ is odd and $p\geq 5$ is prime with
$\gcd(p-1,n)=1$.
Let $\mathcal{F}_{p,n}$ denote the collection
of monic polynomials $f\in\Z[x]$ of degree $n$
that are Eisenstein at the prime $p$.
We order our polynomials by the natural height $\Ht(f)$.
Define $\delta_{p,n}(X)$ to be the 
proportion of polynomials $f\in\mathcal{F}_{p,n}$
with $\Ht(f)\leq X$
for which Heilbronn's criterion applies.
One has
$$
\liminf_{X\to\infty}
\delta_{p,n}(X)\geq \max\left\{\frac{2}{27}\,,\;1-\eps(p)\right\}
\,,
$$
where $\eps(p)\to 0$ and is effectively computable.
In particular, the lower density tends to $1$ as $p\to\infty$
uniformly in $n$.
We also give a version of this result where we weaken the
condition on $\gcd(p-1,n)$.

As a corollary, we show that given an integer $n\geq 2$,
a positive proportion of Eisenstein polynomials of degree $n$
fail to generate norm-Euclidean fields.
\end{abstract}

\maketitle

\section{Introduction}

We are interested in proving statistical results concerning
polynomials with integer coefficents.  Let $\mathcal{F}\subseteq\Z[x]$.
Before proceeding, we must have an ordering on our set,
which is typically endowed by a height function.
Let $\mathcal{F}\subseteq\Z[x]$ be a collection of polynomials.
Most importantly, a height function $H:\mathcal{F}\to\R_{\geq 0}$ should satisfy the following property:
For any $X>0$, the set
$$
\{f\in\mathcal{F}\mid H(f)\leq X\}
$$
is finite.  
Given $$f(x)=a_n x^n+\dots+a_1 x+a_0\in\Z[x]\,,$$
we define the standard height function
$$
\Ht(f)=\max\{|a_0|,|a_1|,\dots,|a_n|\}
\,.
$$
One might consider other height functions, but here we will always
use $\Ht(f)$ as defined above.
Given a subset $\mathcal{F}\subseteq\mathbb{Z}[x]$, we define
the associated counting function
$$
  N(X)=\#\{f\in\mathcal{F}\mid \Ht(f)\leq X\}
  \,.
$$
Given a distinguished subset $\mathcal{F}^*\subseteq\mathcal{F}$,
we let $\delta(X)$ denote the proportion of elements of $\mathcal{F}$ that belong to $\mathcal{F}^*$; that is
$$
  \delta(X)=\frac{N^*(X)}{N(X)}
  \,,
$$
where we have written $N^*(X)$ to denote the counting function associated to $\mathcal{F}^*$.
One then defines the density of $\mathcal{F}^*$ in $\mathcal{F}$ to be
$$
\delta\colonequals \lim_{X\to\infty}\delta(X)
\,,
$$
provided the limit exists.
In cases where it is difficult to prove this limit exists, 
one might seek bounds on the lower and upper densities:
$$
\underline{\delta}\colonequals
\liminf_{X\to\infty}\delta(X)
\,,\;\; 
\overline{\delta}\colonequals
\limsup_{X\to\infty}\delta(X)
\,.$$

We begin with an important example.
Fix a positive integer $n$.
Denote by $\mathcal{F}_n$ the collection
of all monic polynomials in $\Z[x]$ of degree $n$.
It is easy to show that
$$
   N(X)\sim (2X)^n
   \,.
$$
Let $\mathcal{F}_n^*$ be the 
subset of $\mathcal{F}_n$ consisting of Eisenstein polynomials.
Dubickas considers the probability that a randomly chosen 
polynomial of degree $n$ is Eisenstein (see~\cite{MR1995564}).  Indeed, he
proves that the associated density exists and that it equals
\begin{equation}\label{E:dub}
  \delta=1-\prod_p\left(1-\frac{1}{p^n}+\frac{1}{p^{n+1}}\right)
  \,.
\end{equation}
This appears to be a topic of interest, as this paper led to
the papers~\cite{MR3063896, MR3278954, MR3544517, MR3893668}, among others.

On the other hand, one of the central objects of study in algebraic number theory
are number fields.  As an Eisenstein polynomial
$f\in\mathcal{F}^*_n$ is always irreducible,
it generates a number field $K$ of degree $n$.
That is, if we define $K=\Q[x]/(f)$ then one has $[K:\Q]=n$.
We can ask various questions about properties of
this number field as we vary the polynomial.
As is standard, we let $\O_K$ denote the ring of integers in $K$,
and let $N:K\to\Q$ denote the norm map.

One classical question is regarding when the Euclidean algorithm holds in $K$.
We call a number field $K$ norm-Euclidean if for every $\alpha,\beta\in\O_K$, $\beta\neq 0$,
there exists $\gamma,\rho\in\O_K$ such that $\alpha=\gamma\beta+\rho$
and $|N(\rho)|<|N(\gamma)|$.  This is the generalization of the
division property of $\Z$, which ensures that the Euclidean algorithm terminates after a finite number of steps.  The statement that $K$ is norm-Euclidean is equivalent to $\O_K$ being a Euclidean domain with respect to the function $\partial(\alpha)=|N(\alpha)|$.
The quadratic fields that are norm-Euclidean have been completely determined
(see~\cite{MR41883, MR41885, MR53162, MR1191702}),
but this has not been accomplished in full for any degree $n>2$.
The reader may be interested to consult the survey article~\cite{MR1362867}.

One knows that 
all norm-Euclidean fields have class number one, but the converse is not
true in general. Recall that $K$ having class number one is equivalent to $\O_K$ being a unique factorization domain.
We expect the norm-Euclidean property to be rather rare in general, but proving this is quite difficult.
On the other hand, if $K$ is generated by an Eisenstein polynomial, we actually have a tool available to us, due to the existence of a totally ramified prime.  More specifically, if $f$ is Eisenstein at the prime $p$, then $p$ is totally ramified in $K$; that is, $(p)=\mathfrak{p}^n$ for some prime ideal $\mathfrak{p}$.  The tool we refer to is as follows
(see~\cite{MR1574525, MR35313, MR43134, MR752120}):

\begin{lemma}[Heilbronn's criterion]\label{L:heilbronn}
Let $K$ be a number field of degree $n$.
Let $p$ be a prime that is totally ramified in $K$.
If we can write $p=a+b$ where $\{a,-b\}$ are not norms and $a$ is an $n$-th power residue modulo $p$, then $K$ is not norm-Euclidean.
\end{lemma}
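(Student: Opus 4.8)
The plan is to assume $K$ is norm-Euclidean and derive a contradiction by exhibiting a single pair $\alpha,\beta\in\O_K$ for which no valid quotient and remainder exist. Since a norm-Euclidean field has class number one (as noted in the excerpt), $\O_K$ is a PID, so the totally ramified prime satisfies $\mathfrak{p}=(\pi)$ for some $\pi\in\O_K$ with $|N(\pi)|=p$. The whole argument rests on understanding which residues mod $p$ can arise as norms. Concretely, I would first establish the key congruence
$$
N(\xi)\equiv \bar{\xi}^{\,n}\pmod{p}\qquad(\xi\in\O_K),
$$
where $\bar{\xi}$ denotes the image of $\xi$ in the residue field $\O_K/\mathfrak{p}\cong\F_p$ (residue degree one, since $p$ is totally ramified). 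Thus the only residues mod $p$ that occur as norms of elements of $\O_K$ are the $n$-th power residues.

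To prove this congruence I would pass to the completion $K_{\mathfrak{p}}/\Q_p$, a totally ramified extension of degree $n$ in which $\pi$ is a uniformizer with Eisenstein minimal polynomial $g$ and $\O_{K_{\mathfrak{p}}}=\Z_p[\pi]$. Writing $\xi=h(\pi)$ with $h\in\Z_p[x]$, the norm equals the resultant $\prod_i h(\theta_i)$ over the roots $\theta_i$ of $g$. Since $g\equiv x^{n}\pmod p$, every root reduces to $0$, giving $N(\xi)\equiv h(0)^{n}\pmod p$; on the other hand $\pi\equiv 0\pmod{\mathfrak p}$ forces $\bar\xi=\overline{h(0)}$, and the congruence follows. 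This local computation is the technical heart of the argument and the step I expect to require the most care, in particular the passage from a congruence in the ring of integers of a splitting field down to a congruence between rational integers modulo $p$.

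With the congruence in hand the construction is short. I may assume $0<a<p$ and $0<b<p$, since $a,b$ are positive with $a+b=p$. Because $a$ is an $n$-th power residue modulo $p$, choose $c\in\Z$ with $c^{n}\equiv a\pmod p$ and set $\alpha=c$, $\beta=\pi$. For any candidate quotient $\gamma\in\O_K$ the remainder is $\rho=c-\gamma\pi$, and since $\pi\in\mathfrak p$ we have $\bar\rho=\bar c$, whence
$$
N(\rho)\equiv c^{\,n}\equiv a\pmod{p}.
$$
Now $N(\rho)$ is a genuine norm of an element of $\O_K$, so by hypothesis $N(\rho)\neq a$ and $N(\rho)\neq -b$. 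But among the integers congruent to $a$ modulo $p$, the only ones of absolute value less than $p$ are $a$ itself and $a-p=-b$. Hence $|N(\rho)|\ge p=|N(\beta)|$ for every $\gamma$, so no remainder of norm smaller than $|N(\beta)|$ exists. This contradicts the norm-Euclidean property for the pair $(c,\pi)$, completing the proof. The only genuinely delicate inputs are the residue congruence above and the observation that excluding the two non-norms $a$ and $-b$ removes precisely the two small-norm candidates in the coset $a+p\Z$.
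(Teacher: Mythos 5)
Your proof is correct and follows essentially the same route as the paper's: assume $K$ is norm-Euclidean, write $\mathfrak{p}=(\pi)$, apply the division algorithm to an integer $c$ with $c^n\equiv a\pmod{p}$ and the divisor $\pi$, and note that the remainder's norm is $\equiv a\pmod{p}$ and hence must equal $a$ or $-b$, contradicting the hypothesis. The only difference is in how the key congruence $N(\rho)\equiv\bar{\rho}^{\,n}\pmod{p}$ is justified --- you pass to the completion and use the Eisenstein minimal polynomial of the uniformizer, while the paper argues directly with the conjugates $\rho^\sigma$ modulo $\pi$ --- and both routes are sound.
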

For clarity, recall that we refer to $a\in \Z$ as a norm (from $\O_K$) when
there exists $\alpha\in \O_K$ such that $N(\alpha)=a$.
Note that if $n$ is odd, then $b$ is a norm if and only if $-b$ is norm and hence
one can simply write $\{a,b\}$ where we have written $\{a,-b\}$
in the lemma above.  Additionally, note that the criterion never applies
when $p=2$ or $p=3$, so we should always be considering $p\geq 5$ in our applications
of this result.

A natural question to ask is:  How often does Heilbronn's criterion apply to a number field $K$?
One could ask this question in the case of all number fields of fixed degree,
ordered by their discriminant.  
There are some results in this direction when
$n=3$ and $n=5$ (see~\cite{MR4613609, MR4649640}).
However, such questions are intractable 
at present when $n>5$ (see, for example, the introduction to~\cite{MR2745272}).
On the other hand, if we consider the corresponding
statistical questions for polynomials, then something can be said.  For convenience, we will say Heilbronn's criterion applies to a polynomial $f\in\Z[x]$ if the criterion applies to the number field
$K=\Q[x]/(f)$.  We establish the following result:

\begin{theorem}\label{T:1}
Suppose $n\geq 3$ is an odd integer and $p\geq 5$ is a prime with
$\gcd(p-1,n)=1$.
Let $\mathcal{F}_{p,n}$ denote the collection
of monic polynomials $f\in\Z[x]$ of degree $n$
that are Eisenstein at the prime $p$.
Define $\delta_{p,n}(X)$ to be the 
proportion of polynomials $f\in\mathcal{F}_{p,n}$
with $\Ht(f)\leq X$
for which Heilbronn's criterion applies.
One has
$$
\liminf_{X\to\infty}
\delta_{p,n}(X)\geq \max\left\{\frac{2}{27}\,,\;1-\eps(p)\right\}
\,.
$$
Here $\eps(p)\to 0$ and is effectively computable.
\end{theorem}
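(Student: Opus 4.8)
The plan is to reduce Heilbronn's criterion to a purely combinatorial statement about writing $p$ as a sum of two non-norms, and then to detect non-norms by a local obstruction at an auxiliary prime $q\ne p$. Since $\gcd(p-1,n)=1$, the $n$-th power map is a bijection on $\F_p^\times$, so every $a$ with $1\le a\le p-1$ is automatically an $n$-th power residue modulo $p$; and as $n$ is odd, $-b$ is a norm iff $b$ is. Thus Lemma~\ref{L:heilbronn} applies to $f$ as soon as we can write $p=a+b$ with $1\le a\le p-1$ and both $a$ and $b$ failing to be norms from $\O_K$. To produce non-norms I would use the following: if $\theta$ is a root of $f$ and $\mathfrak q\mid q$ is a degree-one prime of $\O_K$, then $\theta\bmod\mathfrak q$ is a root of $f\bmod q$ in $\F_q$. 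Hence if $f\bmod q$ has no root in $\F_q$, every prime above $q$ has residue degree $f_w\ge 2$, and for $\alpha\in\O_K$ one has $v_q(N(\alpha))=\sum_{w\mid q} f_w\,v_w(\alpha)$, a nonnegative integer combination of integers $\ge 2$, so it cannot equal $1$. Consequently any integer $a$ with $v_q(a)=1$ is not a norm, and this certificate is valid with no hypothesis on the index $[\O_K:\Z[\theta]]$ or on ramification.

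Next I would pass to the statistics of $f\bmod q$. Writing a monic Eisenstein polynomial as $f=x^n+pc_{n-1}x^{n-1}+\dots+pc_0$ with $p\nmid c_0$, the constraint $\Ht(f)\le X$ becomes $|c_i|\le X/p$, and the condition $p\nmid c_0$ restricts only $c_0$ modulo $p$. For any finite set of primes $q_1,\dots,q_k$ distinct from $p$, the vectors $(c_i\bmod q_j)$ equidistribute over $\prod_j(\Z/q_j)^{n}$ as $X\to\infty$, and since $p$ is invertible modulo each $q_j$, the reductions $f\bmod q_j$ become jointly independent and uniform over all monic degree-$n$ polynomials over $\F_{q_j}$. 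Let $P_q(n)$ denote the proportion of monic degree-$n$ polynomials over $\F_q$ with no root in $\F_q$. Inclusion--exclusion over the possible roots gives, for $n\ge q$, the exact value $P_q(n)=(1-1/q)^q$; in particular $P_2(n)=1/4$ for $n\ge 2$ and $P_3(n)=8/27$ for $n\ge 3$, while in general $P_q(n)\ge 1/4$ for all $q\ge 2$ and $n\ge 2$. Hence, for fixed $p$, the limit as $X\to\infty$ of the proportion of $f$ with no root modulo each of $q_1,\dots,q_k$ equals $\prod_j P_{q_j}(n)$.

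For the universal bound I would take $q_1=2$ and $q_2=3$: the proportion of Eisenstein polynomials with no root modulo $2$ and no root modulo $3$ tends to $P_2(n)P_3(n)=\tfrac14\cdot\tfrac{8}{27}=\tfrac{2}{27}$, uniformly in odd $n\ge 3$. It then remains to produce, for each such $f$, a representation $p=a+b$ with $v_2(a)=1$ and $v_3(b)=1$ (or the roles reversed, or with another small prime pair), for then $a$ and $b$ are non-norms by the certificate above and Heilbronn's criterion applies. Solving $a\equiv 2\pmod 4$ and $a\equiv p-3\pmod 9$ by the Chinese Remainder Theorem yields such a decomposition once $p$ is large enough, and the extremal case $p=5$ is handled directly by $5=2+3$, which shows the constant $\tfrac{2}{27}$ is attained. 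The fiddly point here is the small-$p$ bookkeeping: for a few residue classes of $p$ modulo $36$ the naive congruences place $a$ or $b$ out of range, and one must swap the two primes or substitute another small pair, checking each time that the resulting product $\prod_j P_{q_j}(n)$ remains $\ge \tfrac{2}{27}$ for all admissible $n$.

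For the bound $1-\eps(p)$ I would instead use all primes $q\le Q\colonequals\lfloor p^{1/4}\rfloor$; these are automatically $\ne p$. By the uniform estimate $P_q(n)\ge 1/4$ and the asymptotic independence above, the proportion of $f$ having at most one no-root prime in this range is at most $(3/4)^{\pi(Q)-1}+(\pi(Q)-1)\tfrac14(3/4)^{\pi(Q)-2}$, which tends to $0$ as $p\to\infty$ and does not depend on $n$. Whenever $f$ has two no-root primes $q_1<q_2\le Q$, we have $q_1^2q_2^2<Q^4\le p$, so the Chinese Remainder Theorem yields a decomposition $p=a+b$ in range with $v_{q_1}(a)=v_{q_2}(b)=1$, and Heilbronn's criterion applies. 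This gives $\liminf_{X\to\infty}\delta_{p,n}(X)\ge 1-\eps(p)$ with $\eps(p)$ equal to the displayed tail probability, effectively computable and tending to $0$ uniformly in $n$; taking the maximum of the two lower bounds finishes the proof. The main obstacle throughout is keeping the two inputs decoupled: the number-field ingredient (the non-norm certificate, insensitive to index and ramification) must be cleanly separated from the elementary but surprisingly delicate problem of guaranteeing an additive decomposition of $p$ with prescribed $q$-adic valuations, which is where essentially all of the small-$p$ and uniformity care resides.
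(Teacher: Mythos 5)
Your proposal is correct and follows essentially the same route as the paper: the same non-norm certificate (no root of $f$ modulo $q$ forces every prime above $q$ to have residue degree $\geq 2$, so no integer of $q$-adic valuation $1$ is a norm, cf.\ Lemma~\ref{L:key}), the same equidistribution of $f\bmod q$ over Eisenstein polynomials, the same local densities $(1-1/q)^q$ with the pair $\{2,3\}$ giving $2/27$ (and $\{2,5\}$ for the exceptional primes, which turn out to be $p=7,11,19$), and the same use of all primes up to $p^{1/4}$ for the $1-\eps(p)$ bound. The only cosmetic difference is that you obtain the decomposition $p=a+b$ with $v_{q_1}(a)=v_{q_2}(b)=1$ by CRT modulo $q_1^2q_2^2$ where the paper invokes the Frobenius coin problem for $q_1^2,q_2^2$ --- the same threshold $p\geq q_1^2q_2^2$ either way --- and your valuation-based phrasing of the non-norm certificate is, if anything, slightly more careful than the paper's, since it directly rules out $uq$ (with $q\nmid u$) being a norm rather than only $q$ itself.
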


When $p$ is large, there are many ways to write $p=a+b$ and hence
determining the exact density of polynomials that satisfy Heilbronn's
criterion is likely very difficult.  However, the previous
theorem says that the lower density tends to $1$ as $p\to\infty$,
uniformly in the variable $n$.

As a result, we immediately obtain the following consequence:
When $n\geq 3$ is odd,
a positive proportion of Eisenstein polynomials of degree $n$
fail to generate norm-Euclidean fields.

The condition from Theorem~\ref{T:1} that $\mathrm{gcd}(p-1,n) = 1$
is not a severe restriction as there are infinitely 
many primes $p$ that satisfy this condition for each fixed $n$.
Nonetheless, we show that we can
relax this condition at the expense of considering larger primes.
This will allow us to deal with the case where $n$ is even.

\begin{theorem}\label{T:2}
Suppose $n\geq 2$ and $p\geq 5$ is a prime with
$$
\gcd(p-1,n)<\frac{p^{1/2}}{(\log p)^2}
\,.
$$
Define $\delta_{p,n}(X)$ as in Theorem~\ref{T:1}.
One has
$$
\liminf_{X\to\infty}
\delta_{p,n}(X)\geq 1-\hat{\eps}(p)
\,,
$$
where $\hat{\eps}(p)\to 0$ and is effectively computable.
\end{theorem}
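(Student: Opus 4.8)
The plan is to certify Heilbronn's criterion through two local non-norm obstructions and then to satisfy them for all but a vanishing proportion of $f$. First I record the obstructions. Writing $(p)=\mathfrak p^{\,n}$ and reducing the norm of a $\mathfrak p$-unit modulo $\mathfrak p$ shows that every $m\in\Z$ with $p\nmid m$ that is a norm from $\O_K$ is an $n$-th power residue modulo $p$. Since $a+b=p$ gives $-b\equiv a\pmod p$, the integers $a$ and $-b$ are $n$-th power residues or not together; in particular, once $a$ is taken to be one — as the criterion requires — this obstruction at $p$ is silent for both $a$ and $-b$, and the non-norm conditions must come from elsewhere. The second obstruction supplies them: if $q\neq p$ is unramified in $K$ and $f$ has no root modulo $q$, then no prime of $\O_K$ above $q$ has residue degree $1$, so $v_q(N(\alpha))\neq1$ for every $\alpha\in\O_K$; hence any integer $m$ with $v_q(m)=1$ fails to be a norm. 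This statement ignores the sign of $m$, which is exactly what lets us treat $-b$ on the same footing as $b$ and so dispense with the parity hypothesis on $n$ of Theorem~\ref{T:1}.

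These remarks turn the problem into a counting problem. Let $q$ run over the primes below $p$ that are $n$-th power residues modulo $p$; for each such $q$ put $a=q$ and $b=p-q$, and let $\ell=P^{+}(p-q)$ be the largest prime factor of $p-q$. Restrict to those $q$ for which $\ell>\sqrt p$, so that $\ell\,\|\,(p-q)$, and for which $q,\ell\nmid\disc f$. I will call $f$ \emph{good at $q$} if $f$ has a root modulo neither $q$ nor $\ell$. For such $f$ the integer $a=q$ is an $n$-th power residue modulo $p$ that is not a norm, while the absence of a root modulo $\ell\,\|\,(p-q)$ shows that $b$, and hence $-b$, is not a norm; thus $p=a+b$ witnesses Heilbronn's criterion. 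It therefore suffices to prove that all but a proportion $\hat\eps(p)\to0$ of $f$ are good at some admissible $q$.

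Next I would show that admissible $q$ are plentiful. Expanding the indicator of $(\F_p^{\times})^{n}$ in the characters of $\F_p^{\times}$ of order dividing $d=\gcd(p-1,n)$ and bounding the $d-1$ nontrivial character sums over primes by P\'olya--Vinogradov, the number of primes $q<p$ that are $n$-th power residues modulo $p$ is $\tfrac1d\,\pi(p)+O\!\left(\sqrt p\,\log p\right)$. The hypothesis $d<p^{1/2}/(\log p)^2$ is precisely the assertion that the main term $\tfrac1d\,\pi(p)\asymp p/(d\log p)$ dominates the error, so this count is $\gg p/(d\log p)\to\infty$; a standard bound on the largest prime factor of $p-q$ shows that a positive proportion of these $q$ also satisfy $P^{+}(p-q)>\sqrt p$. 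Let $R=R(p)$ denote the number of admissible $q$ that survive; then $R\to\infty$, and this growth is the engine of the argument.

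For a given admissible $q$, the events ``$f$ has a root modulo $q$'' and ``$f$ has a root modulo $\ell$'' are governed by $f\bmod q$ and $f\bmod\ell$, which by the Chinese Remainder Theorem equidistribute independently of one another and of the Eisenstein condition at $p$ as $f$ ranges over $\mathcal F_{p,n}$ with $\Ht(f)\le X$; since a random monic polynomial of degree $n\ge2$ over a finite field has no root with probability at least $\tfrac14$ uniformly in $n$, each admissible $q$ makes $f$ good with probability at least an absolute constant $c>0$. If the conditions attached to distinct $q$ were independent, the proportion of $f$ good at no admissible $q$ would be at most $(1-c)^{R}$, giving $\liminf_{X\to\infty}\delta_{p,n}(X)\ge 1-(1-c)^{R}=:1-\hat\eps(p)\to1$. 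The step I expect to be hardest is making this independence rigorous: the auxiliary primes $\ell=P^{+}(p-q)$ are tied to $q$ and can coincide for different $q$, so the events across the family are only nearly independent, and one must also set aside the few $q$ for which $p-q$ has no large prime factor or for which $q\ell\mid\disc f$. I would handle the correlations by passing to a large subfamily on which the primes $\ell$ are distinct, or by replacing the product bound with a second-moment (large-sieve) estimate; tracking the resulting error terms produces the explicit, effective $\hat\eps(p)\to0$. Finally, the case $\gcd(p-1,n)=1$ of Theorem~\ref{T:1} is simply $d=1$, where every prime below $p$ is an $n$-th power residue and $R=\pi(p)$, so the same argument recovers the bound $1-\eps(p)$ there.
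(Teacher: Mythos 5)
Your reduction of Heilbronn's criterion to local non-norm conditions is sound, and your decomposition $p=q+(p-q)$ with $\ell=P^{+}(p-q)$ is genuinely different from the paper's; but the argument has a fatal gap at the step where you count admissible $q$. You assert that the number of primes $q<p$ that are $n$-th power residues modulo $p$ is $\frac{1}{d}\pi(p)+O(\sqrt{p}\log p)$ ``by P\'olya--Vinogradov.'' P\'olya--Vinogradov bounds $\sum_{m\le x}\chi(m)$ over \emph{all integers} $m$ in an interval; it says nothing about $\sum_{q\le x}\chi(q)$ restricted to \emph{primes}. Equidistribution of the primes up to $p$ among the power-residue classes of a character of modulus $p$ is a statement about primes in arithmetic progressions with modulus as large as the range, which is unconditionally out of reach (a Siegel zero for a quadratic character dividing $\psi$ could in principle make almost all primes $q<p$ non-residues, destroying your supply of admissible $q$ entirely when $d$ is even). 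This is precisely the difficulty the paper's proof is engineered to avoid: there one fixes two \emph{small} primes $q_1<q_2\le Y$ that are non-norms, writes $p=uq_1+vq_2$, and imposes the $n$-th power residue condition on $a=uq_1$ with $u$ ranging over an \emph{interval of integers}; the character sum in $u$ is then a genuine P\'olya--Vinogradov sum, and the hypothesis $\gcd(p-1,n)<p^{1/2}/(\log p)^2$ is exactly what makes the resulting main term beat the error $O\left((pq_1q_2)^{1/2}\log(pq_1q_2)\right)$.

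Two further steps would also need real work even if the count of admissible $q$ were granted. First, ``a standard bound on the largest prime factor of $p-q$'' does not give you $P^{+}(p-q)>\sqrt{p}$ for a positive proportion of $q$ drawn from a possibly very sparse set of primes: when $d$ is near $p^{1/2}/(\log p)^2$ your set has only about $p^{1/2}\log p$ elements, and nothing prevents $p-q$ from being smooth for all of them. Second, the correlation problem you flag is not cosmetic: if the primes $\ell_q$ coincide for many $q$ (say, all equal to one $\ell$), then the single event ``$f$ has a root mod $\ell$'' makes $f$ bad at all those $q$ simultaneously and the bound $(1-c)^{R}$ collapses to a constant; you would need many admissible $q$ with pairwise distinct $\ell_q$, which again requires input you have not supplied. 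The remedy is to rework the argument along the paper's lines: obtain the two small non-norm primes $q_1<q_2\le Y$ from the bound $C_q(n)\ge 1/4$ exactly as in the proof of Theorem~\ref{T:1}, and spend the analytic effort on choosing $u$ in $p=uq_1+vq_2$ so that $\psi(uq_1)=1$.
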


\begin{corollary}\label{C:1}
Fix an integer $n\geq 2$.
A positive proportion of Eisenstein polynomials of degree $n$
fail to generate norm-Euclidean fields.
\end{corollary}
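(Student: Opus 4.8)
The plan is to deduce Corollary~\ref{C:1} from Theorems~\ref{T:1} and~\ref{T:2} together with the elementary observation that, for a single well-chosen prime $p$, the polynomials that are Eisenstein at $p$ already constitute a positive proportion of all Eisenstein polynomials of degree $n$. By Lemma~\ref{L:heilbronn}, every polynomial to which Heilbronn's criterion applies generates a field that is not norm-Euclidean; hence it suffices to exhibit a positive lower density of Eisenstein polynomials to which Heilbronn's criterion applies.

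First I would fix the ambient counts. Write $\mathcal{E}=\mathcal{F}_n^*$ for the collection of all monic Eisenstein polynomials of degree $n$, with counting function $N^*(X)$, and use the trivial upper bound $N^*(X)\le (2X+1)^n$ coming from the full set of monic polynomials of degree $n$ (alternatively one may invoke the exact asymptotic $N^*(X)\sim\delta\,(2X)^n$ from~\eqref{E:dub}). For a fixed prime $p$, a monic polynomial $x^n+a_{n-1}x^{n-1}+\dots+a_0$ lies in $\mathcal{F}_{p,n}$ precisely when $p\mid a_i$ for $0\le i\le n-1$ and $p^2\nmid a_0$; counting the admissible coefficients in $[-X,X]$ gives
$$
N_{p,n}(X)\sim \bigl(p^{-n}-p^{-n-1}\bigr)(2X)^n
\,.
$$
Next I would choose $p$. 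For any fixed $n\ge 2$ one may take $p$ large enough that $n<p^{1/2}/(\log p)^2$, which forces $\gcd(p-1,n)\le n<p^{1/2}/(\log p)^2$, and large enough that $\hat\eps(p)<1$; both hold for all sufficiently large $p$ since $\hat\eps(p)\to 0$. Theorem~\ref{T:2} then gives $\liminf_{X\to\infty}\delta_{p,n}(X)\ge 1-\hat\eps(p)$, a positive constant which I will call $c_p$. (When $n\ge 3$ is odd one may instead invoke Theorem~\ref{T:1} with any prime $p\ge 5$ satisfying $\gcd(p-1,n)=1$, which exist by Dirichlet's theorem, obtaining the cleaner bound $c_p\ge 2/27$.)

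Finally I would assemble the estimate. Let $\mathcal{H}_p\subseteq\mathcal{F}_{p,n}$ denote the polynomials to which Heilbronn's criterion applies; by Lemma~\ref{L:heilbronn} every $f\in\mathcal{H}_p$ generates a non-norm-Euclidean field, and $\mathcal{H}_p\subseteq\mathcal{E}$. Writing $\#\{f\in\mathcal{H}_p:\Ht(f)\le X\}=\delta_{p,n}(X)\,N_{p,n}(X)$ and bounding $N^*(X)\le(2X+1)^n$, the proportion of Eisenstein polynomials that fail to generate norm-Euclidean fields is at least
$$
\frac{\delta_{p,n}(X)\,N_{p,n}(X)}{N^*(X)}
\ge \delta_{p,n}(X)\cdot\frac{N_{p,n}(X)}{(2X+1)^n}
\,.
$$
Since $N_{p,n}(X)/(2X+1)^n\to p^{-n}-p^{-n-1}>0$ and $\liminf_{X\to\infty}\delta_{p,n}(X)\ge c_p$, taking $\liminf$ (using $\liminf(a_Xb_X)\ge(\liminf a_X)(\lim b_X)$ when $b_X\to b>0$ and $a_X\ge 0$) yields a lower density of at least $c_p\,(p^{-n}-p^{-n-1})>0$, which completes the proof. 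I do not expect any genuine obstacle here: the only work beyond citing the two theorems is the routine coefficient count establishing the asymptotic for $N_{p,n}(X)$ and the verification that a suitable prime $p$ exists, since the substantive analysis is entirely contained in Theorems~\ref{T:1} and~\ref{T:2}.
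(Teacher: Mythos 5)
Your proposal is correct, and its overall shape matches the paper's: reduce to Theorems~\ref{T:1} and~\ref{T:2} via Lemma~\ref{L:heilbronn}, then observe that $\mathcal{F}_{p,n}$ has positive density (namely $E_p(n)=p^{-n}-p^{-n-1}$) among all monic, hence among all Eisenstein, polynomials of degree $n$. The one genuine divergence is in how you produce an admissible prime for Theorem~\ref{T:2}: the paper invokes Dirichlet's theorem to find primes $p\equiv -1\pmod{2n}$, which forces $\gcd(p-1,n)=2$, and then only needs $2<p^{1/2}/(\log p)^2$; you instead use the trivial bound $\gcd(p-1,n)\le n$ and take $p$ large enough that $n<p^{1/2}/(\log p)^2$ and $\hat\eps(p)<1$. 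Your route is more elementary --- it removes the appeal to primes in arithmetic progressions entirely and works for every sufficiently large prime rather than a special residue class --- at the modest cost that ``sufficiently large'' now depends on $n$ through the inequality $n<p^{1/2}/(\log p)^2$ rather than through the fixed threshold $p_0$. Both arguments are complete; your explicit handling of the density comparison (the bound $N^*(X)\le(2X+1)^n$ and the $\liminf$ of a product) spells out what the paper leaves as ``the result follows.''
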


In \S\ref{S:EH} we discuss Eisenstein polynomials and Heilbronn's criterion,
and outline the strategy of our proof.  In addition, we establish some preliminary 
lemmas.  In the case where the proofs are very short, we err on the side of giving
proofs of known results for the sake of completeness.
In \S\ref{S:gotime} we prove our main counting result for Eisenstein polynomials
satisfying certain other local conditions.  In \S\ref{S:local} we prove a formula
for the relevant local densities, which is then used to establish the necessary bounds.
In \S\ref{S:final}
we put everything together and prove Theorem~\ref{T:1}. Finally, in \S\ref{S:reallyFinal}, we prove Theorem \ref{T:2}.

\section{Eisenstein polynomials and Heilbronn's criterion}\label{S:EH}

Let $f\in\Z[x]$ and write
$$
  f(x)=a_nx^n+a_{n-1}x^{n-1}+\dots+a_1x+a_0
  \,.
$$
We say that $f$ is Eisenstein at the prime $p$
(or $p$-Eisenstein for short) if 
$p \mid a_i$
for $i=0,\dots,n-1$, $p\nmid a_n$, and
$p^2\nmid a_0$.
We refer to a polynomial as Eisenstein, if it is
$p$-Eisenstein for some prime $p$.
It is well-known
that Eisenstein polynomials are irreducible in $\Z[x]$.
We will always assume $f$ is monic (i.e., $a_n=1$)
which makes the condition at $a_n$ unnecessary.

We will write $\Z[x]_{\text{mon}}$ for the collection
of monic polynomials with coefficents in $\Z$.
Throughout the rest of this section we will always
consider $f\in\Z[x]_\text{mon}$ and set $K=\Q[x]/(f)$.
The letters $p,q$ will always denote primes in $\Z$.

We outline our strategy.
Let $f\in\Z[x]_\text{mon}$ be a $p$-Eisenstein polynomial
with $p\geq 5$.
By Lemma~\ref{L:ramified} below, we know that $p$ is totally ramified in $K$.
This opens up the possibility of applying
Heilbronn's criterion (i.e., Lemma~\ref{L:heilbronn}).
We can simplify matters by assuming $\gcd(n,p-1)=1$,
which guarantees that every integer is an $n$-th power residue modulo $p$.
As mentioned earlier,
this will not be a severe restriction as there are infinitely 
many primes $p$ that satisfy this condition for each fixed $n$.

Given two primes $q_1<q_2<p$
we write
$p=uq_1+vq_2$ for $(u,q_1)=1$ and $(u,q_2)=1$.
This is possible when $p$ is large enough compared to $q_1$, $q_2$
(see Lemma~\ref{lem: Froby}).
If $q_1$ and $q_2$ are not norms from $K$,
then this is enough to invoke Heilbronn's criterion.
Moreover, the condition that $f(x)$ has no roots modulo $q$
is sufficient to guarantee that $q$ is not a norm
(see Lemma~\ref{L:key}).

\begin{lemma}\label{L:ramified}
If $f(x)$ is $p$-Eisenstein, then $p$ is totally ramified in~$K$.
\end{lemma}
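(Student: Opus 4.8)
The plan is to argue via valuation theory, which is exactly the content of the Newton polygon of $f$ at $p$. Since $f$ is monic with integer coefficients, its root $\theta$ (so that $K=\Q(\theta)$) is an algebraic integer, and hence every prime $\mathfrak{p}$ of $\O_K$ lying above $p$ assigns $\theta$ a nonnegative valuation. The goal is to show that any such $\mathfrak{p}$ has ramification index $e=n$, which by the fundamental identity already pins down the factorization of $(p)$.

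First I would fix an arbitrary prime $\mathfrak{p}\mid p$ with ramification index $e=e(\mathfrak{p}/p)$, and normalize the associated valuation $v=v_\mathfrak{p}$ so that $v(p)=e$; thus $v$ extends $e\cdot v_p$ on $\Q$. The Eisenstein hypotheses then translate to $v(a_i)\geq e$ for $0\leq i\leq n-1$ and $v(a_0)=e$ (the latter because $p\|a_0$). Writing $m=v(\theta)\geq 0$, a short argument gives $m\geq 1$: if $m=0$ then $v(\theta^n)=0$, while every other term $a_i\theta^i$ in the relation $\theta^n=-\sum_{i<n}a_i\theta^i$ has valuation $\geq e\geq 1$, contradicting the equality forced by the ultrametric property.

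Next I would determine $m$ exactly by comparing the valuations of the three kinds of terms in $f(\theta)=0$. The leading term $\theta^n$ has valuation $nm$; the constant term $a_0$ has valuation exactly $e$; and each intermediate term $a_i\theta^i$ with $1\leq i\leq n-1$ has valuation $\geq e+im>e$, using $m\geq 1$. Because these terms sum to zero, the minimum valuation among them must be attained at least twice. Since the intermediate terms are all strictly larger than $e$, the only way this can happen is $nm=e$. Combined with the general bound $e\leq n$ and $m\geq 1$, this forces $m=1$ and $e=n$.

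Finally, since this single prime contributes $e=n$ to the identity $\sum_i e_i f_i=[K:\Q]=n$, it must be the unique prime above $p$ and have residue degree $1$, giving $(p)=\mathfrak{p}^n$. The only delicate point is the ultrametric step asserting that the minimum valuation is attained at least twice; this is precisely where the full force of the Eisenstein condition enters (both $p\mid a_i$ for $i<n$ and $p^2\nmid a_0$), and everything else is routine bookkeeping with $v$.
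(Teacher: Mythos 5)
Your argument is correct and follows essentially the same route as the paper: both fix a prime $\mathfrak{p}$ above $p$, use the Eisenstein condition to force $v_\mathfrak{p}(\theta)\geq 1$, and then compare $v_\mathfrak{p}(\theta^n)$ with $v_\mathfrak{p}(a_0)=e$ to conclude $n\leq e\leq n$. Your version is slightly more explicit about the ultrametric ``minimum attained twice'' principle and the final appeal to $\sum e_if_i=n$, but the substance is identical.
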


\begin{proof}
Let $\mathfrak{p}$ be a prime in $K$ over $p$,
and set $e= v_\mathfrak{p}(p)$.
We aim to show $e=n$.  Of course, a priori one has $e\leq n$.
Let $\alpha$ be a root of $f$ in $K$.  From $f(\alpha)=0$,
using the $p$-Eisenstein condition,
we find $\mathfrak{p}$ divides $(\alpha)$, and therefore $v_\mathfrak{p}(\alpha^n)\geq n$.
Moreover,
$v_\mathfrak{p}(a_0)=e$ and $v_\mathfrak{p}(a_i\alpha^i)\geq e+1$ for $i=1,\dots,n-1$,
which implies $v_\mathfrak{p}(\alpha^n)=e$.  It follows that
$n\leq e$, completing the proof.
\end{proof}


\begin{lemma}\label{lem: Froby}
    Let $q_1 < q_2$ be primes. Let $p \ge q_1^2q_2^2$. Then there exist $u,v\in\Z^+$ such that
    \begin{enumerate}
        \item $p = uq_1 + vq_2$,
        \item $q_1\nmid u$, $q_2\nmid v$. 
    \end{enumerate}
\end{lemma}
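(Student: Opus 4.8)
The plan is to treat $u$ as the free variable, since once $u$ is chosen the relation $p=uq_1+vq_2$ forces $v=(p-uq_1)/q_2$. The four requirements then translate into conditions on $u$ alone: the integrality $v\in\Z$ becomes $uq_1\equiv p\pmod{q_2}$; the non-divisibility $q_2\nmid v$ becomes $uq_1\not\equiv p\pmod{q_2^2}$; the positivity $v>0$ becomes $uq_1<p$; and of course we still require $q_1\nmid u$. I would work modulo $M\colonequals q_1q_2^2$, the least common multiple of the moduli $q_1$, $q_2$, $q_2^2$ appearing in these congruences.

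First I would check that the congruence conditions cut out a nonempty set of residues modulo $M$. Since $q_1\ne q_2$ are primes, $q_1$ is a unit modulo $q_2^2$, so $uq_1\equiv p\pmod{q_2}$ pins $u$ down to a single class modulo $q_2$, that is, to $q_2$ classes modulo $q_2^2$; discarding the unique lift with $uq_1\equiv p\pmod{q_2^2}$ leaves $q_2-1$ admissible classes modulo $q_2^2$. The condition $q_1\nmid u$ independently allows $q_1-1$ classes modulo $q_1$. By the Chinese Remainder Theorem these combine to exactly $(q_1-1)(q_2-1)$ admissible classes modulo $M$, and since $q_1\ge 2$ and $q_2\ge 3$ this count is at least $1$.

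It remains to produce a representative that also satisfies the inequality $uq_1<p$. I would take any admissible class and let $u_0\in\{1,\dots,M\}$ be its representative in that window. The key observation is that $q_1\mid M$, so the integer $M$ itself fails $q_1\nmid u$ and hence does not represent an admissible class; therefore $u_0\le M-1$. Consequently $u_0q_1\le(M-1)q_1=q_1^2q_2^2-q_1<q_1^2q_2^2\le p$, where the last inequality is exactly the hypothesis. Setting $v_0=(p-u_0q_1)/q_2$, the congruence $u_0q_1\equiv p\pmod{q_2}$ guarantees $v_0\in\Z$, while $u_0q_1<p$ guarantees $v_0>0$, so in fact $v_0\ge 1$. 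By construction $q_1\nmid u_0$ and $q_2\nmid v_0$, which completes the argument.

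The only delicate point is the positivity of $v$, and it is precisely there that the hypothesis $p\ge q_1^2q_2^2$ is consumed. The mild surprise is that no slack beyond the bare inequality is needed: because $q_1\mid M$ automatically forces the chosen representative down to $u_0\le M-1$, the estimate $u_0q_1<p$ persists even when $p$ meets the bound with equality, so there is no boundary case to treat separately and, in particular, no need to assume that $p$ is prime.
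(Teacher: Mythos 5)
Your proof is correct, but it takes a genuinely different route from the paper's. The paper is symmetric in $q_1,q_2$: it fixes target residues $r_1\in\{1,\dots,q_1-1\}$ and $r_2\in\{1,\dots,q_2-1\}$, checks that $p-r_1q_1-r_2q_2\ge(q_1^2-1)(q_2^2-1)$, and invokes the Frobenius coin-exchange theorem for the coprime denominations $q_1^2$ and $q_2^2$ to write this difference as $q_1^2t_1+q_2^2t_2$, whence $u=q_1t_1+r_1$ and $v=q_2t_2+r_2$ automatically avoid divisibility by $q_1$ and $q_2$ respectively. You instead single out $u$ as the free variable, translate all four requirements (including $q_2\nmid v$, which correctly becomes $uq_1\not\equiv p\pmod{q_2^2}$) into congruence conditions on $u$ modulo $M=q_1q_2^2$, count $(q_1-1)(q_2-1)\ge 2$ admissible classes by CRT, and then use the observation that $q_1\mid M$ forces a representative $u_0\le M-1$, so $u_0q_1<q_1^2q_2^2\le p$ and $v_0>0$. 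Your approach is more self-contained (no appeal to the Chicken McNugget theorem), makes transparent that neither $p$ nor, beyond coprimality, $q_1$ and $q_2$ really need to be prime, and even shows the marginally weaker hypothesis $p>q_1^2q_2^2-q_1$ suffices; the paper's approach has the aesthetic advantage of handling $u$ and $v$ symmetrically in one stroke. Both consume the hypothesis $p\ge q_1^2q_2^2$ at exactly the same point, namely in guaranteeing positivity of the remaining coefficient.
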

\begin{proof}
Let $r_1\in \{1,2,\ldots, q_1-1\}$ and $r_2\in \{1,2,\ldots, q_2-1\}$ be fixed. Then $p\ge q_1^2q_2^2$ implies
$$p-r_1q_1-r_2q_2 \ge q_1^2q_2^2 -(q_1-1)q_1-(q_2-1)q_2 \ge (q_1^2-1)(q_2^2-1).$$

By the Frobenius coin-exchange problem, since $q_1^2$ and $q_2^2$ are coprime, and 
$$p-r_1q_1-r_2q_2\ge (q_1^2-1)(q_2^2-1),$$
there exist nonnegative integers $t_1, t_2$ such that
$$p-r_1q_1-r_2q_2 = q_1^2t_1+q_2^2t_2.$$
Then
$$p = q_1(q_1t_1+r_1)+q_2(q_2t_2+r_2) = q_1u+q_2v,$$
where $u = q_1t_1 + r_1$ and $v = q_2t_2 + r_2$ with $q_1\nmid u$ and $q_2\nmid v$.
\end{proof}


\begin{lemma}\label{L:key}
Let $q$ be prime.
If $q$ is a norm from $K$, then $f(x)$ has a root in $\F_q$.
\end{lemma}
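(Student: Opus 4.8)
The plan is to show that a prime value of the norm forces the existence of a degree-one prime above $q$, and then to reduce the defining relation of a generator of $K$ modulo such a prime.

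Let $\theta$ be a root of $f$ in $K$, so that $K=\Q(\theta)$; since $f$ is monic with integer coefficients, $\theta$ is an algebraic integer and hence $\theta\in\O_K$. Suppose $q$ is a norm, so that $N(\alpha)=q$ for some $\alpha\in\O_K$. Passing to the principal ideal $(\alpha)$, the ideal norm satisfies $N\bigl((\alpha)\bigr)=|N(\alpha)|=q$. Because $q$ is prime and the ideal norm is multiplicative across the factorization of $(\alpha)$ into prime ideals, there is a unique prime ideal $\mathfrak{q}\mid(\alpha)$ with $N(\mathfrak{q})=q$. A prime lying over the rational prime $q$ whose ideal norm equals $q$ has residue degree one, so $\O_K/\mathfrak{q}\cong\F_q$.

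Next I would reduce modulo $\mathfrak{q}$. The quotient map $\pi\colon\O_K\to\O_K/\mathfrak{q}\cong\F_q$ is a ring homomorphism whose restriction to $\Z$ is reduction modulo $q$, since $\mathfrak{q}\cap\Z=q\Z$. Applying $\pi$ to the relation $f(\theta)=0$ and writing $\bar f\in\F_q[x]$ for the reduction of $f$ modulo $q$, we obtain $\bar f\bigl(\pi(\theta)\bigr)=0$ in $\F_q$. Thus $\pi(\theta)\in\F_q$ is a root of $f$ modulo $q$, which is exactly the desired conclusion.

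The one point to handle with care---and the place where a naive argument could go wrong---is that $\O_K$ may properly contain $\Z[\theta]$, so one cannot simply invoke Dedekind's theorem on the factorization of $q$ (which would require $q\nmid[\O_K:\Z[\theta]]$). The reduction-map argument sidesteps this entirely: the implication ``there is a degree-one prime over $q$'' $\Rightarrow$ ``$f$ has a root modulo $q$'' holds unconditionally, since it uses only that $\theta$ maps into the residue field $\F_q$ together with $f(\theta)=0$. I do not expect any serious obstacle beyond keeping this index subtlety in mind.
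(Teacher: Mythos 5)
Your proof is correct and follows essentially the same route as the paper: produce a prime ideal $\mathfrak{q}$ of norm $q$ (hence residue field $\F_q$) and reduce the relation $f(\theta)=0$ modulo $\mathfrak{q}$. You supply slightly more detail than the paper on why such a degree-one prime exists (via multiplicativity of the ideal norm) and on why the index $[\O_K:\Z[\theta]]$ is irrelevant, but the argument is the same.
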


\begin{proof}
Suppose $q$ is a norm.  Then there is a prime $\mathfrak{q}$ in $K$
such that $N(\mathfrak{q})=q$.  This means $|\O_K/\mathfrak{q}|=q$
and hence $\O_K/\mathfrak{q}\cong \F_q$.
Let $\alpha$ be a root of $f(x)$ in $K$
and consider $\overline{\alpha}$, the image of $\alpha$ under the 
reduction map $\O_K\to\O_K/\mathfrak{q}$.  Since $f(\alpha)=0$,
we must have $f(\overline{\alpha})=0$ in $\F_q$,
which proves the result.
\end{proof}

\begin{proof}[Proof of Lemma~\ref{L:heilbronn}]
Suppose $p$ is totally ramified and we can write $p=a+b$ as in
the statement of the result.
By way of contradiction, suppose $K$ is norm-Euclidean.
By hypothesis, $(p)=\mathfrak{p}^n$, where $N(\mathfrak{p})=p$.
It follows that $p=u\pi^n$ in $K$,
for some $u\in\mathcal{O}_K^\times$ and some
$\pi\in\mathcal{O}_K$ irreducible with $N(\pi)=p$.
Since $a$ is an $n$-th power residue modulo $n$, there exists $x\in\Z$ such that $x^n\equiv a\pmod{p}$.
Using the norm-Euclidean condition, we find
$x=\gamma\pi+\rho$ for some $\gamma,\rho\in\mathcal{O}_K$
with $|N(\rho)|<|N(\pi)|=p$.  We have $x\equiv \rho\pmod{\pi}$
and hence $x\equiv \rho^\sigma\pmod{\pi}$ for any
embedding $\sigma:K\to\C$.  This leads to
$a\equiv x^n\equiv N(\rho)\pmod{p}$.
We have $0<a<p$ and $-p<N(\rho)<p$.
Thus either $N(\rho)=a$ or $N(\rho)+p=a$.
This implies one of $\{a,-b\}$ is a norm, 
a contradiction.
\end{proof}


\section{Counting $p$-Eisenstein polynomials with local specifications}\label{S:gotime}

Given $a=(a_0,\dots,a_{n-1})\in\Z^n$, we define
$$
\Ht(a)=\max\{|a_0|,\dots,|a_{n-1}|\}
$$
so that it agrees  with our polynomial height.
The next result is what allows us to count polynomials 
of bounded height with local specifications.
This type of result is certainly well-known,
but the proof is short and so we include it for the 
sake of completeness.
\begin{lemma}\label{L:sieve}
Let $S$ be a subset of $(\Z/m\Z)^n$.  We have
$$
\#\{a\in\Z^n\mid a\hspace{-1ex}\mod{m}\in S\,,\;\Ht(a)\leq X\}
=
\frac{|S|}{m^n}(2X)^n+O\left(n2^n m(2X)^{n-1}\right)
\,.
$$
\end{lemma}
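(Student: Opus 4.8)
The plan is to split the count according to residue classes modulo $m$ and to exploit the fact that both constraints are imposed coordinatewise. Writing $S\subseteq(\Z/m\Z)^n$, I would express the left-hand side as $\sum_{s\in S}N_s(X)$, where $N_s(X)$ counts the integer tuples $a=(a_0,\dots,a_{n-1})$ with $a\equiv s\pmod m$ and $\Ht(a)\le X$. Since the congruence $a\equiv s\pmod m$ and the height bound $\max_i|a_i|\le X$ both decouple across the coordinates, this count factors as $N_s(X)=\prod_{i=0}^{n-1}c(s_i)$, where $c(r)\colonequals\#\{t\in\Z\mid t\equiv r\pmod m,\ |t|\le X\}$ is the number of integers in the interval $[-X,X]$ lying in the fixed residue class $r$ modulo $m$.

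The first substantive step is the one-dimensional estimate. Counting the elements of an arithmetic progression with common difference $m$ inside an interval of length $2X$ is elementary and gives $c(r)=\tfrac{2X}{m}+\theta_r$ with $|\theta_r|\le 1$, uniformly in $r$. Substituting this into the product and expanding multilinearly, I obtain
$$
N_s(X)=\left(\frac{2X}{m}\right)^{n}+\sum_{\emptyset\ne T\subseteq\{0,\dots,n-1\}}\left(\frac{2X}{m}\right)^{n-|T|}\prod_{i\in T}\theta_{s_i}\,,
$$
so that, using $|\theta_{s_i}|\le 1$ termwise,
$$
\left|N_s(X)-\left(\frac{2X}{m}\right)^{n}\right|\le\sum_{k=1}^{n}\binom{n}{k}\left(\frac{2X}{m}\right)^{n-k}\,.
$$

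The second step is to sum over $s\in S$. The principal terms assemble into $|S|\,(2X/m)^n=\tfrac{|S|}{m^n}(2X)^n$, which is exactly the claimed main term. For the error I would bound $|S|\le m^n$ and absorb the factor $m^n$ into each summand via $m^n(2X/m)^{n-k}=m^k(2X)^{n-k}$, obtaining a total error of at most $\sum_{k=1}^n\binom{n}{k}m^k(2X)^{n-k}$. In the relevant regime $m\le 2X$ (which holds for all large $X$, the range in which the asymptotic is asserted), one has $m^k(2X)^{n-k}=m(2X)^{n-1}(m/2X)^{k-1}\le m(2X)^{n-1}$ for each $k\ge 1$, so the error is at most $\bigl(\sum_{k=1}^n\binom{n}{k}\bigr)m(2X)^{n-1}\le 2^n m(2X)^{n-1}$, comfortably within the stated $O\!\left(n2^n m(2X)^{n-1}\right)$.

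The only genuinely delicate point is the bookkeeping of the error: one must combine the $n$ independent $O(1)$ coordinate errors without the bound degrading, and check that the geometric decay of $(m/2X)^{k-1}$ keeps the $k=1$ contribution dominant. This is precisely where the assumption $m\le 2X$ enters; outside this regime the higher-order terms $m^k(2X)^{n-k}$ need not be controlled by the $k=1$ term, but that regime is irrelevant to the $X\to\infty$ statement. Everything else is routine: the factorization is immediate from the product structure of the box together with the coordinatewise congruence, and the one-dimensional lattice-point count is standard.
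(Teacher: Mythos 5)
Your proof is correct, but it takes a genuinely different route from the paper's. You fix a residue class $s\in S$, factor the count coordinatewise, invoke the one-dimensional fact that a residue class modulo $m$ meets $[-X,X]$ in $2X/m+\theta$ points with $|\theta|\le 1$, expand the product multilinearly, and sum over $S$ using $|S|\le m^n$. The paper instead inscribes the largest box $\mathcal{B}=(-km,km]^n$ whose side is a multiple of $m$ inside $[-X,X]^n$, observes that the count on $\mathcal{B}$ is exactly $|S|(2k)^n$, and then separately bounds the discrepancy between $k^n$ and $(X/m)^n$ via the Binomial Theorem and the number of lattice points in $[-X,X]^n\setminus\mathcal{B}$. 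The two arguments are comparably elementary and yield the same shape of error term; yours in fact saves the factor of $n$, giving $O\!\left(2^n m(2X)^{n-1}\right)$, and makes the source of the error transparent (one $O(1)$ per coordinate per class), while the paper's avoids all per-class bookkeeping by working with a single exact sub-box. Your restriction to the regime $m\le 2X$ is harmless and could even be removed: when $m>2X$ the left-hand side is at most $(2X+1)^n\le (3/2)^n(2X)^n\le 2^n m(2X)^{n-1}$ and the main term is at most $(2X)^n$, so the asserted bound holds trivially; note the paper's own proof carries the same implicit restriction, since it needs $k\ge 1$, i.e.\ $m\le X$, for $\mathcal{B}$ to be nonempty.
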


\begin{proof}
Let $\mathcal{A}\colonequals [-X,X]^n\cap\Z^n$.
Choose the maximal $k\in\Z^+$ such that
$$
  \mathcal{B}\colonequals (-km,km]^n\cap\Z^n\subseteq \mathcal{A}
  \,.
$$
The set $\mathcal{B}$ is chosen so that
$$
\#\{a\in\mathcal{B}\mid a\hspace{-1ex}\mod{m}\in S\}
=
|S|(2k)^n
$$
Using the Binomial Theorem,
one has
$$
\left(\frac{X}{m}\right)^n \ge k^n \ge \left(\frac{X}{m}-1\right)^n
=
\left(\frac{X}{m}\right)^n+O\left(n2^n\left(\frac{X}{m}\right)^{n-1}\right)
\,,
$$
and therefore
$$
\#\{a\in\mathcal{B}\mid a\hspace{-1ex}\mod{m}\in S\}=
\frac{|S|}{m^n}(2X)^n+O\left(n 2^n m(2X)^{n-1}\right)
\,.
$$
Next one bounds
$$
  \#(\mathcal{A}\setminus\mathcal{B})
  \leq
  (2X+1)^n-(2km)^n
  \ll
  n2^n m(2X)^{n-1}
  \,.
$$
The result follows.
\end{proof}

\begin{remark}\label{R:sieve}
One can think of the quantity $|S|/m^n$ in the statement of the previous lemma
as a density in $(\Z/m\Z)^n$,
and using the Chinese Remainder Theorem,
we can recover it as a product of local densities 
at the prime powers dividing $m$.
\end{remark}

Our typical application of
Lemma~\ref{L:sieve} will be to impose a condition
modulo $q_1$ and $q_2$, as well as the $p$-Eisenstein condition, which is a
condition modulo $p^2$.  In this case, the modulus is given as $m=q_1 q_2 p^2$.

Before proceeding, we require a couple definitions.
First we define
\begin{equation}\label{E:eisenstein}
E_p(n)\colonequals
\frac{1}{p^n}-\frac{1}{p^{n+1}}
\,,
\end{equation}
which is the $p$-Eisenstein density that contributes
to (\ref{E:dub}).
Secondly, we write $C_q(n)$ to denote --- the number of
$(a_0,\dots,a_{n-1})\in(\Z/q\Z)^n$ such that the corresponding polynomial
in $\F_q[x]$ has no roots in $\F_q$ ---
divided by $q^n$.
We are now ready to prove the following result, which
will be used in the proof of Theorem~\ref{T:1}.
\begin{prop}\label{P:count}
Let $n\in\Z^+$.
Suppose $q_1<\dots<q_t<p$ are primes.
Let $\mathcal{F}_{p,n}$ denote the collection
of monic polynomials $f\in\Z[x]$ of degree $n$
that are Eisenstein at the prime $p$.
The number of polynomials $f\in\mathcal{F}_{p,n}$
with $\Ht(f)\leq X$
for which
$f$ has no roots in $\F_q$ for all $q\in\{q_1,\dots,q_t\}$
equals
$$
  E_p(n)\left(\prod_{i=1}^t C_{q_i}(n)\right)(2X)^n+O\left(n 2^n q_1\dots q_t p^2 (2X)^{n-1}\right)
  \,.
$$
\end{prop}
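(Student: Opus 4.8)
The plan is to realize all of the imposed conditions as congruence conditions modulo a single modulus and then apply Lemma~\ref{L:sieve}. A monic polynomial $f(x)=x^n+a_{n-1}x^{n-1}+\dots+a_0$ is determined by the tuple $a=(a_0,\dots,a_{n-1})\in\Z^n$, and $\Ht(f)=\Ht(a)$. The $p$-Eisenstein condition depends only on $a\bmod p^2$, while the condition that $f$ has no root in $\F_{q_i}$ depends only on $a\bmod q_i$. Since $q_1,\dots,q_t,p$ are distinct primes, I would set $m=q_1\cdots q_t\,p^2$ and let $S\subseteq(\Z/m\Z)^n$ be the set of residues satisfying all of these conditions simultaneously.

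First I would compute the local density at $p$. The $p$-Eisenstein condition requires $p\mid a_i$ for $0\le i\le n-1$ and $p^2\nmid a_0$. Modulo $p^2$ this gives $p-1$ admissible choices for $a_0$ (the nonzero multiples of $p$) and $p$ choices for each of $a_1,\dots,a_{n-1}$, so the density of Eisenstein tuples in $(\Z/p^2\Z)^n$ is
$$\frac{(p-1)p^{n-1}}{p^{2n}}=\frac{1}{p^n}-\frac{1}{p^{n+1}}=E_p(n),$$
matching the quantity defined in (\ref{E:eisenstein}). By the definition of $C_{q_i}(n)$, the density in $(\Z/q_i\Z)^n$ of tuples whose associated monic polynomial has no root in $\F_{q_i}$ is exactly $C_{q_i}(n)$.

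Next, because the moduli $q_1,\dots,q_t,p^2$ are pairwise coprime, the Chinese Remainder Theorem identifies $(\Z/m\Z)^n$ with the product $\prod_{i}(\Z/q_i\Z)^n\times(\Z/p^2\Z)^n$, and under this identification the condition defining $S$ is the product of the conditions at each prime. As noted in Remark~\ref{R:sieve}, this makes the global density multiplicative, so
$$\frac{|S|}{m^n}=E_p(n)\prod_{i=1}^t C_{q_i}(n).$$
Applying Lemma~\ref{L:sieve} with this $S$ and $m=q_1\cdots q_t\,p^2$ then yields the main term $E_p(n)\bigl(\prod_{i=1}^t C_{q_i}(n)\bigr)(2X)^n$ together with the error term $O\bigl(n2^n m(2X)^{n-1}\bigr)=O\bigl(n2^n q_1\cdots q_t p^2(2X)^{n-1}\bigr)$, as claimed.

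There is little genuine obstacle here; the proposition is essentially a bookkeeping consequence of Lemma~\ref{L:sieve} and the CRT factorization of Remark~\ref{R:sieve}. The one point that requires care is confirming that the three types of conditions are genuinely independent --- that is, that they are cut out modulo pairwise coprime moduli --- so that the local densities multiply cleanly; this is exactly where the distinctness of the primes $q_1,\dots,q_t,p$ is used. The only computation of substance is verifying that the $p$-local density equals $E_p(n)$, which is the short count recorded above.
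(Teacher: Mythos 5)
Your proposal is correct and follows exactly the same route as the paper: apply Lemma~\ref{L:sieve} with modulus $m=q_1\cdots q_t\,p^2$, factor the density $|S|/m^n$ into local densities via Remark~\ref{R:sieve}, verify the $p^2$-local density is $(p-1)p^{n-1}/p^{2n}=E_p(n)$, and note that the $C_{q_i}(n)$ factors hold by definition. No discrepancies.
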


\begin{proof}
This is an application of Lemma~\ref{L:sieve} with the modulus
$m=q_1\dots q_t p^2$.  By Remark~\ref{R:sieve}
we can determine the density $|S|/m^n$ for each prime power separately.
The condition that $f$ is $p$-Eisenstein corresponds to
\begin{align*}
&a_0\hspace{-1ex}\mod{p^2}\in\{p,2p,\dots,(p-1)p\}\,,
\\
&a_i\hspace{-1ex}\mod{p^2}\in\{0,p,\dots,(p-1)p\}\,,\;i=1,\dots,p-1
\,.
\end{align*}
Therefore the density mod $p^2$ equals $(p-1)p^{n-1}/p^{2n}$
in agreement with (\ref{E:eisenstein}).
The values of $C_q(n)$ are correct tautologically.
\end{proof}

\begin{remark}
In Proposition~\ref{P:count}, if one wants to specify that for certain
primes $q$, the condition described in the statement does not hold, then
one simply replaces $C_{q}(n)$ with $1-C_{q}(n)$.
\end{remark}


\section{Evaluation and estimation of the local densities}\label{S:local}

The following lemma gives a formula for our local density
$C_p(n)=A_p(n)/p^n$.  Here $C_p(n)$ is the quantity
defined immediately before Proposition~\ref{P:count}
and $A_p(n)$ is defined forthwith.

\begin{lemma}\label{lem: A_p formula}
The number of monic polynomials in $\F_p[x]$ of degree $n$
with no roots equals
\begin{equation}\label{E:Fg}
A_p(n)\colonequals
\sum_{\substack{f\in \mathbb{F}_p[x]_{\text{mon}}\\f \text{ has no roots}\\
    \deg(f) = n}}1
    =
\sum_{k=0}^{n}(-1)^k {p\choose k}p^{n-k}
\,.
\end{equation}
  When $n\geq p$, this becomes
$$
\left(
1-\frac{1}{p}
\right)^p
p^n
\,.
$$
\end{lemma}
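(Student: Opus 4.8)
The plan is to count by inclusion--exclusion over the possible roots in $\F_p$. There are exactly $p^n$ monic polynomials of degree $n$ in $\F_p[x]$, parametrized freely by their $n$ lower coefficients. For each $a\in\F_p$ let $R_a$ denote the set of such polynomials having $a$ as a root; the quantity $A_p(n)$ we want is the size of the complement of $\bigcup_{a\in\F_p}R_a$, and this is exactly the situation inclusion--exclusion is designed to handle.

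First I would record the key counting fact. For a subset $S\subseteq\F_p$ of size $k$, a monic polynomial $f$ vanishes at every point of $S$ if and only if the squarefree polynomial $g_S(x)=\prod_{a\in S}(x-a)$, which is monic of degree $k$, divides $f$. When $k\le n$ this means $f=g_S\cdot h$ for a unique monic $h$ of degree $n-k$, so there are exactly $p^{n-k}$ such $f$; when $k>n$ there are none, since a nonzero polynomial of degree $n$ has at most $n$ distinct roots. Since the contribution depends only on $|S|$, grouping subsets by size turns the inclusion--exclusion sum into a sum over $k$ with a factor $\binom{p}{k}$.

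Then inclusion--exclusion gives
$$
A_p(n)=\sum_{S\subseteq\F_p}(-1)^{|S|}\,\#\{f:g_S\mid f\}=\sum_{k=0}^{p}(-1)^k\binom{p}{k}\,c_k,
$$
where $c_k=p^{n-k}$ for $k\le n$ and $c_k=0$ for $k>n$. In either regime, $p\le n$ or $p>n$, the nonvanishing terms are precisely those with $k\le\min(n,p)$: terms with $k>p$ drop out because $\binom{p}{k}=0$, and terms with $n<k\le p$ drop out because $c_k=0$. Hence the sum collapses to $\sum_{k=0}^{n}(-1)^k\binom{p}{k}p^{n-k}$, which is the asserted formula \eqref{E:Fg}.

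For the closed form when $n\ge p$, every index $0\le k\le p$ satisfies $k\le n$, so $c_k=p^{n-k}$ throughout and the binomial coefficients truncate the sum at $k=p$. Factoring out $p^n$ and applying the binomial theorem,
$$
A_p(n)=p^n\sum_{k=0}^{p}\binom{p}{k}\Bigl(-\tfrac1p\Bigr)^k=\Bigl(1-\tfrac1p\Bigr)^p p^n.
$$
The only point requiring care is the bookkeeping of the two edge cases---$k$ exceeding $p$ versus $k$ exceeding $n$---so that the formal sum over all subsets matches the truncated sum in the statement; this is elementary, and I expect no genuine obstacle beyond getting that reconciliation exactly right.
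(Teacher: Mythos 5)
Your proof is correct and follows the same inclusion--exclusion strategy as the paper's own argument, counting the multiples of $\prod_{a\in S}(x-a)$ for each subset $S\subseteq\F_p$ and grouping by $|S|$. You simply spell out the bookkeeping of the two truncation regimes ($k>n$ versus $k>p$) more explicitly than the paper does; no substantive difference.
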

\begin{proof}
    The count proceeds by inclusion-exclusion. We start with all polynomials, then remove the polynomials that are multiples of $(x-a)$ for $a\in\mathbb{F}_p$. Therefore, we get that the number of monic polynomials in $\mathbb{F}_p[x]$ of degree $n$ with no roots equals
    $$\sum_{k=0}^n (-1)^k{p\choose k}p^{n-k}.$$    
    Note that as ${p\choose k}=0$ whenever $k>p$,
    we find that for $n\geq p$, one has
    \begin{align*}\sum_{k=0}^n (-1)^k{p\choose k}p^{n-k}&=\sum_{k=0}^p (-1)^k{p\choose k}p^{n-k} \\&= p^n \sum_{k=0}^p {p\choose k} \left(\frac{-1}{p}\right)^k = p^n\left(1-\frac{1}{p}\right)^p.
    \end{align*}
\end{proof}

Finally, we require bounds on $C_p(n)$.
\begin{lemma}\label{L:Cbounds}
For $n\ge 2$ we have
\begin{equation}\label{eq: A ineq}
\frac{1}{4} \le \frac{p^2-1}{3p^2}\le C_p(n) \le \frac{p-1}{2p} < \frac{1}{2}\,. 
\end{equation}
\end{lemma}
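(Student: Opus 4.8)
The plan is to divide the explicit formula from Lemma~\ref{lem: A_p formula} by $p^n$ and treat the result as an alternating sum. Writing $a_k \colonequals \binom{p}{k}p^{-k}$, one has
$$
C_p(n)=\frac{A_p(n)}{p^n}=\sum_{k=0}^{n}(-1)^k a_k
\,.
$$
The first observation is that $a_0=a_1=1$, so these two leading terms cancel, and for every $n\ge 2$ we are left with $C_p(n)=\sum_{k=2}^{n}(-1)^k a_k$, an alternating sum whose first term is $a_2$.

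Next I would establish that the $a_k$ are strictly decreasing and nonnegative for $1\le k\le p$ (with $a_k=0$ thereafter). This is the only computational input: the ratio $a_{k+1}/a_k=\frac{p-k}{p(k+1)}$ lies strictly between $0$ and $1$ for $1\le k\le p-1$, giving $1=a_1>a_2>\dots>a_p>0$. With monotonicity in hand, the heart of the argument is the Leibniz-type bound for alternating sums of a decreasing sequence. Setting $T_m\colonequals\sum_{k=2}^{m}(-1)^k a_k$, I would record that the even partial sums $T_{2j}$ decrease in $j$, the odd partial sums $T_{2j+1}$ increase in $j$, and every odd partial sum is at most every even partial sum. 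Consequently every partial sum, and in particular $C_p(n)=T_n$ for each $n\ge 2$, lies in the interval $[\,a_2-a_3,\;a_2\,]$: the upper endpoint $T_2=a_2$ is the global maximum (attained at $n=2$) and the lower endpoint $T_3=a_2-a_3$ is the global minimum (attained at $n=3$), so the resulting bounds are sharp.

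It then remains to evaluate the two endpoints in closed form and compare them with the claimed constants. A direct computation gives
$$
a_2=\frac{p-1}{2p}\,,\qquad
a_2-a_3=\frac{p-1}{2p}-\frac{(p-1)(p-2)}{6p^2}=\frac{p^2-1}{3p^2}
\,.
$$
The upper bound then reads $C_p(n)\le \frac{p-1}{2p}<\frac12$, and the lower bound reads $C_p(n)\ge \frac{p^2-1}{3p^2}$; since $\frac{p^2-1}{3p^2}=\frac13-\frac{1}{3p^2}\ge \frac14$ precisely when $p\ge 2$, the full chain of inequalities in \eqref{eq: A ineq} follows.

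None of the steps presents a serious obstacle; the only point demanding care is correctly identifying $T_3$ and $T_2$ as the extreme partial sums, which is exactly where the strict monotonicity of the $a_k$ is used. One should also keep in mind the degenerate small-prime cases, where the sum terminates early (e.g.\ $a_k=0$ for $k>p$); there the interval $[a_2-a_3,a_2]$ collapses but the inequalities persist, so no separate treatment is needed.
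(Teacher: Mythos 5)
Your proof is correct and follows essentially the same route as the paper: both divide the inclusion--exclusion formula by $p^n$, observe that the terms $\binom{p}{k}p^{-k}$ are decreasing, and apply the alternating-sum (Leibniz) bounds to trap $C_p(n)$ between $\binom{p}{2}/p^2-\binom{p}{3}/p^3=\frac{p^2-1}{3p^2}$ and $\binom{p}{2}/p^2=\frac{p-1}{2p}$. Your write-up is if anything slightly more careful than the paper's (you verify the ratio is strictly between $0$ and $1$, where the paper only asserts positivity), so no changes are needed.
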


\begin{proof}
First note that 
    $f(k) = {p\choose k}/p^k$
    is a decreasing function. Indeed, for $k\le p-1$
    $$\frac{f(k)}{f(k+1)} = \frac{(k+1)p}{p-k} > 0,$$
    so $f(k) > f(k+1)$ for all $k\le p-1$. Since $f(k) = 0$ for $k > p$, and $f(k) > 0$ for $k\le p$, it follows that $f(k)$ is decreasing. Since $f(k)$ is decreasing and $A_p(n)$ is an alternating sum of positive decreasing terms, we get for $n\ge 2$,
    $$\frac{{p\choose 2}}{p^2} - \frac{{p\choose 3}}{p^3} \le \frac{A_p(n)}{p^n} \le \frac{{p\choose 2}}{p^2}.$$
    \end{proof}

\section{Proof of the main result}\label{S:final}

\begin{proof}[Proof of Theorem~\ref{T:1}]
Adopt the notation from the statement of the theorem.
Further, write $N_{p,n}(X)$ to denote the number of 
$f\in\mathcal{F}_{p,n}$ with $\Ht(f)\leq X$.
We have $N_{p,n}(X)\sim E(p)(2X)^n$ where $E(p)$
is given in (\ref{E:eisenstein}).
This follows from~\cite{MR1995564} or alternatively,
from Proposition~\ref{P:count} with $t=0$.

First we will establish the lower bound
$\liminf_{X\to\infty}\delta_{p,n}(X)\geq 2/27$.
Assume $p\neq 7,11,19$.
By Lemma~\ref{lem: Froby}, we can write $p=2u+3v$ with
$(2,u)=1$ and $(3,v)=1$ for $p\ge 36$. One can then manually check that the only examples of primes up to 36 that cannot be expressed in such a way are $p = 7, 11, 19$.
Therefore, provided $f(x)$ has no roots in $\F_2$
or $\F_3$, Heilbronn's criterion applies.
(Recall Lemma~\ref{L:heilbronn} and the discussion in~\S\ref{S:EH}.)
Write $N^*_{p,n}(X)$ to denote the counting function for this collection
of polynomials.
Using Proposition~\ref{P:count}
with $q_1=2$, $q_2=3$, we have
$$
  N^*_{p,n}(X)\sim E_p(n)C_2(n)C_3(n)(2X)^n
  \,,
$$
and therefore,
via Lemma~\ref{lem: A_p formula} with $n\geq 3$,
we find
$$
\liminf_{X\to\infty}
\delta_{n,p}(X)\geq C_2(n)C_3(n)= (1/4)(8/27)=2/27
\,.
$$

In the case where $p\in\{7,11,19\}$,
we can write $p=2u+5v$ with
$(2,u)=1$ and $(5,v)=1$.
The same approach proves
$$
\liminf_{X\to\infty}\delta_{n,p}(X)\geq C_2(n)C_5(n)\geq(1/4)(8/25)>2/27
\,.
$$
Indeed, by Lemma~\ref{L:Cbounds},
as $n\geq 3$ is odd,
one has $C_5(n)\geq 8/25$.

It remains to establish a lower bound $\delta_{p,n}(X)\geq 1-\eps(p)$
where $\eps(p)\to 0$.
We may assume that $p$ is large.
Choose $Y$ maximal such that \mbox{$Y^4<p$}.
Let $q_1<\dots<q_t\leq Y$ be all the primes less than $Y$.
In particular, Lemma~\ref{lem: Froby} applies
to any pair of primes in $Q\colonequals\{q_1,\dots,q_t\}$
and we have $t=\pi(Y)\leq \pi(p^{1/4})$.
Before proceeding with the final estimate,
observe that by Lemma~\ref{L:Cbounds}, one has
$$
  D\colonequals\max_{j=1,\dots,t}\left\{\frac{C(q_j)}{1-C(q_j)}\right\} \le 1
  \,.
$$
We wish to bound from above the
number of $p$-Eisenstein polynomials with $\Ht(f)\leq X$
such that Heilbronn's criterion does not apply for any
pair of primes in $Q$.
This means that $f$ has no root modulo $q$ for at most one $q\in Q$.
Using Proposition~\ref{P:count}
in the situation where
$\{q\in Q\mid f \text{ has a root in }\F_q\}$
equals $Q$, $Q\setminus\{q_1\}$, $Q\setminus\{q_2\}$, and so on,
we obtain
\begin{align}\label{E:liminf}
  &1-\liminf_{X\to\infty}\delta_{n,p}(X)
  \notag\\
  &\quad\leq
  \prod_{i=1}^t(1-C(q_i))+
  \sum_{j=1}^tC(q_j)\left((1-C(q_j))^{-1}\prod_{i=1}^t(1-C(q_i)\right)
  \notag\\
  &\quad\leq
  (1+tD)\prod_{i=1}^t(1-C(q_i))
  \notag\\
  &\quad\leq
  \left(1+\pi\left(p^{1/4}\right)\right)\left(\frac{3}{4}\right)^{\pi(p^{1/4})}=:\eps(p)\,.
\end{align}
In the last step, we have used
the fact that $C_p(n)\ge 1/4$ for $p\geq 2$
which follows from Lemma~\ref{L:Cbounds}.
\end{proof}

\section{Weakening the gcd condition}\label{S:reallyFinal}

\begin{proof}[Proof of Theorem~\ref{T:2}]

Let $Y>0$ be a parameter to be chosen later.
Let $\mathcal{F}^*_{p,n}$ denote the subset of 
$\mathcal{F}_{p,n}$ consisting of polynomials 
for which there exist primes
$q_1<q_2\leq Y$ that are not norms in $K$.
Applying the argument from the proof of Theorem~\ref{T:1}
leading up to (\ref{E:liminf}), we find that the
lower density of $\mathcal{F}^*_{p,n}$ in $\mathcal{F}_{p,n}$
is bounded below by $1-\hat{\eps}(p)$ where
$$
\hat{\eps}(p)\colonequals \left(1+\pi(Y)\right)\left(\frac{3}{4}\right)^{\pi(Y)}
\,.
$$

Set $g\colonequals \gcd(n,p-1)$.
Next we show that Heilbronn's criterion applies
when $g$ is small enough compared to $p$.
Let $f\in\mathcal{F}_{p,n}^*$.  Then there exist primes
$q_1<q_2\leq Y$ that are not norms in $K$.
Let $\psi$ be a primitive Dirichlet character modulo $p$
of order $g$, so that $\psi(u)=1$ if and only if $u$ is an $n$-th power residue modulo $p$.
Note that there are $(p-1)/g$ elements of $(\Z/p\Z)^\times$
that are $n$-th power residues modulo $p$.

Set $X\colonequals p/q_1-2q_2$.
Assume $Y<p/4$ so that $X<p$.
For the moment,
assume we can find a positive integer $u<X$ such that
\begin{equation}\label{E:new}
(u,q_1)=1\,,
\;\;
q_1 u\equiv p\pmod{q_2}\,,
\;\;
\psi(uq_1)=1
\,.
\end{equation}
Since $u<p/q_1$, this implies we can write
$p=uq_1+vq_2$ with $v>0$.  Observe that
Heilbronn's criterion applies,
except possibly when $q_2\mid v$.  In the case where $q_2\mid v$,
we try $p=(u+q_2)q_1+(v-q_1)q_2$, which works
unless $q_1\mid u+q_2$.  If this is the case, we try
$p=(u+2q_2)q_1+(v-2q_1)q_2$ which is forced to work.
Note that $u<X$ implies $u<u+q_2<u+2q_2<p/q_1$ and hence
the aforementioned expressions all involve positive integers.

Let us count the number of
$u<X$ satisfying (\ref{E:new}).  Using standard techniques,
this count equals
\begin{equation}\label{E:main.error}
  X\frac{q_1-1}{q_1}\cdot\frac{1}{q_2}\cdot\frac{1}{g}+O\left(m^{1/2}\log m\right),
\end{equation}
where the error term comes from an application of the P\'olya--Vinogradov
inequality to a Dirichlet character of modulus $m=p q_1 q_2$.
Indeed, one sums the product of the following indicator functions over $u < X$:
\begin{align*}
\phi_0(u)
&=
\begin{cases}1 & \text{ if } \mathrm{gcd}(u,q_1)=1\,,\\0 & \text{ if }\mathrm{gcd}(u,q_1)>1\,,\end{cases}
\\
\frac{1}{\phi(q_2)}
\sum_{\chi\text{ mod }q_2}
\chi(uq_1)\chi(p)^{-1}
&=
\begin{cases}1 & \text{ if } q_1 u\equiv p\pmod{q_2}\,,\\0 & \text{otherwise}\,,\end{cases}
\\
\frac{1}{g}\sum_{k=1}^g\psi^k(uq_1)
&=
\begin{cases}1 & \text{ if } \psi(uq_1)=1\,, \\ 0 & \text{otherwise}\,.\end{cases}
\end{align*}
In the above $\phi_0$ is the principal character modulo $q_1$,
the sum $\sum_{\chi\text{ mod }q_2}$ is taken over all Dirichlet characters
modulo $q_2$, and $\psi$ is the character modulo $p$ defined above.

In order to guarantee that $u<X$ satisfying (\ref{E:new}) exists, we must have the main term
of (\ref{E:main.error})
strictly greater than the error term.
Using $q_1\geq 2$ and the definition of $X$,
the following condition suffices
$$
  \frac{1}{2}\left(\frac{p}{q_1 q_2}-2\right)\frac{1}{g}>Cm^{1/2}\log m
  \,,
$$
where $C$ is the constant from P\'olya--Vinogradov.
Using $m=p q_1 q_2$ with $q_1,q_2\leq Y$, and some simple manipulation,
the sufficient condition becomes
$$
  g<\frac{p^{1/2}}{2C Y^3\log(pY^2)}
  -\frac{1}{C p^{1/2}\log{p}}
  \,.
$$
It suffices to prove the theorem when $p$ is sufficiently large,
and the result follows for large $p$ upon choosing $Y=(\log p)^{1/4}$.
\end{proof}

\begin{proof}[Proof of Corollary~\ref{C:1}]
First suppose $n$ is odd.
From Theorem~\ref{T:1}, we have
$$
  \liminf_{X\to\infty}\delta_{5,n}(X)\geq 2/27
  \,.
$$
Since $\mathcal{F}_{5,n}$ is of positive density inside $\mathcal{F}_n$,
the set of all monic $f\in\Z[x]$ with $\deg(f)=n$,
the result follows.

Next, suppose $n$ is even.  We would like to invoke
Theorem~\ref{T:2}.  We know there is a $p_0$ such that
$\hat{\eps}(p)<1$ and
$2<\sqrt{p}/(\log{p})^2$
when $p\geq p_0$.
To invoke the theorem, we need a prime
$p\geq p_0$ such that $\gcd(p-1,n)$ is small enough.
We claim there exists infinitely many primes $p$
satisfying the condition.  Indeed, when $p\equiv -1\pmod{2n}$
one finds $\gcd(p-1,n)=2$.  Choosing such a prime with $p\geq p_0$
allows us to conclude that
$$
\liminf_{X\to\infty}\delta_{p,n}(X)>0
\,,
$$
completing the proof.
\end{proof}

\section{Acknowledgements}
This research was conducted as part of the Research Experience for Undergraduates and
Teachers program at California State University, Chico, supported by NSF grant DMS2244020.
The authors would like to thank Paul Pollack for helpful conversations, including suggesting Theorem \ref{T:2}.

\bibliography{heilbronn}{}
\bibliographystyle{plain}


\end{document}